\def\qed{\hfill$\Box$}
\newtheorem{theorem}{Theorem}
\newtheorem{proposition}{Proposition}
\newtheorem{lemma}{Lemma}
\newtheorem{definition}{Definition}
\newcommand{\Z}{\mbox{${\mathbb
 Z}$}}
\title{Null and non--rainbow colorings of projective plane and sphere triangulations}
\author{Jorge L. Arocha and Amanda Montejano}
\begin{document}

\maketitle

\begin{abstract}
For maximal planar graphs of order $n\geq 4$, we prove that a vertex--coloring containing no rainbow faces uses at most $\left\lfloor\frac{2n-1}{3}\right\rfloor$ colors, and this is best possible. For  maximal graph embedded on the projective plane, we obtain the analogous best bound $\left\lfloor\frac{2n+1}{3}\right\rfloor$. The main ingredients in the proofs are classical homological tools. By considering graphs as  topological spaces, we introduce the notion of a null coloring,  and prove that for any graph $G$ a maximal null coloring $f$ is such that the quotient graph $G/f$ is a forest. 
\end{abstract}

\section{Introduction}
For maximal planar graphs embedded on the sphere, or maximal graphs embedded  on the projective plane, we study vertex colorings containing no tricolored faces.  We study the exact value from which tricolored faces are inevitable. Giving a graph embedded on a surface, the problem of maximizing the number of colors in a vertex coloring  avoiding rainbow faces, has been studied recently in several papers. Before summarizing some known results, we give few definitions. 

Let $G$ be a graph embedded on a surface. A \emph{$k$--coloring} of $G$ is a surjective mapping $f:V(G)\to \{1,2,...k\}$ or, equivalently, a partition of $V(G)$ into exactly $k$ parts called \emph{color classes}. A face of $G$ is said to be \emph{rainbow} if its vertices receive mutually distinct colors. A coloring which contains no rainbow faces is called a \emph{non--rainbow} coloring. The largest integer for which there is a non--rainbow coloring of $G$ is denoted by $\chi_f(G)$. We shall observe that there exist a non--rainbow $k$--coloring of $G$ for every $k\leq \chi_f(G)$, while all $k$--colorings of $G$ with $k> \chi_f(G)$ contain a rainbow face. Thus $\chi_f(G)+1$ is the minimum integer for which every coloring of $G$ contains a rainbow face (in this sense the problem is considered an extremal anti--Ramsey problem).

Most of the research concerning this subject has been done in the context of planar graphs, that is graphs embedded in the sphere. Ramamurthi and West \cite{rw} studied tight lower bonds for $\chi_f(G)$ where $G$ is a $2$--, $3$--, or $4$--chromatic planar graph.  Jendrol' and Schr\"{o}tter \cite{js} determine $\chi_f(G)$ for all semiregular polyhedra. Jungi\'c, Kr\'{a}l'and Skrekovski \cite{jks} investigate the problem for triangle--free planar graphs and provided sharp lower bounds in terms of the girth $g$, for every $g\geq 4$. Dvo\v{r}\'{a}k, Kr\'{a}l' and \v{S}krekovski\cite{dks} study upper bounds for $3$--, $4$-- and $5$--connected planar graphs. In particular, for  a $3$--connected planar graph $G$, they obtain the best possible upper bound $\chi_f(G)\leq \left\lfloor\frac{7n-8}{9}\right\rfloor$. Related works concerning graphs embedding in other surfaces are~\cite{neg,abn1,abn2}.

In this paper we investigate the problem  for maximal planar graphs (also called \emph{sphere triangulations}), and maximal graphs embedded in the projective plane (also called \emph{projective plane triangulations}). In both cases we provide a sharp upper bound slightly  smaller than the upper bound mentioned in the previous paragraph for $3$--connected planar graphs.

\begin{theorem}\label{thm:ub_s}
A non--rainbow coloring of a maximal planar graph with $n\geq 4$ vertices uses at most $\left\lfloor\frac{2n-1}{3}\right\rfloor$ colors.
A non--rainbow coloring of a maximal graph embedded on a projective plane with $n\geq 4$ vertices uses at most $\left\lfloor\frac{2n+1}{3}\right\rfloor$ colors.
\end{theorem}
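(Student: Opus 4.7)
The plan is to combine the lemma from the abstract --- that a maximal null coloring yields a forest quotient $G/f$ --- with Euler's formula and careful face counting.

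First, I would reduce to the case where $f$ is a maximal non-rainbow coloring: every non-rainbow $k$-coloring refines to a maximal one using at least $k$ colors, so it suffices to bound $k$ in the maximal case. For such an $f$, the lemma implies that the quotient graph $G/f$ is a forest, and since $G$ is connected, $G/f$ is a tree with exactly $k-1$ edges.

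Next, I would classify each triangular face as type (a), where all three vertices share a color, or type (b), where exactly two share a color (yielding one monochromatic edge and two parallel bichromatic edges between one pair of classes). Let $m,b,F_a,F_b$ denote the counts of monochromatic edges, bichromatic edges, and type (a), (b) faces respectively. Since each edge of a closed triangulation lies in exactly two faces,
\[
2m = 3F_a + F_b, \qquad 2b = 2F_b,
\]
so $F_b = b$. Combined with Euler's formula ($F=2n-4$ on the sphere, $F=2n-2$ on the projective plane), this gives $m+b = 3n-6$ and $m+b = 3n-3$ respectively.

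The crucial third step is to extract the sharp bound $3k \leq 2n-1$ (sphere) or $3k \leq 2n+1$ (projective plane) from the tree structure together with the above identities. Using maximality, each color class is connected through monochromatic edges, so $m \geq n-k$; and each face being non-rainbow yields $m \geq F/2$. To obtain the sharp factor $2/3$, I would carry out a topological accounting of the subregions $R_{AB}$ --- one per tree edge $\{A,B\}$, consisting of the type (b) faces whose color pair is $\{A,B\}$ --- summing their Euler characteristics and relating the total to $\chi(S)$ via inclusion--exclusion along the monochromatic boundary edges.

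The main obstacle is this last step. The simple bounds $m \geq n-2$ and $m \geq n-k$ only yield $k \leq 2n-3$; the sharp bound requires exploiting the tree structure of $G/f$ to control how the bichromatic edges distribute among the $k-1$ tree edges, which I expect to be the technical core of the proof.
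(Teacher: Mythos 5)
Your setup coincides with the paper's: pass to a maximal non--rainbow coloring, use the fact that it is a maximal null coloring so that $T/f$ is a tree with $k-1$ edges, and compare a face count against those $k-1$ edges via Euler's formula. But the decisive step --- where the factor $3$ comes from --- is exactly the one you leave open. Your identities $F_b=b$ and $m+b=3n-6$ are correct but do not by themselves force $3k\le 2n-1$, and the route you sketch (summing Euler characteristics of the subregions $R_{AB}$ with inclusion--exclusion along monochromatic boundary edges) is not developed far enough to count as an argument. As it stands the proposal has a genuine gap at its core.

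The missing idea is a local count per tree edge, and it is simpler than what you are attempting. Fix an adjacent pair of color classes $\{A,B\}$. Every $(AB)$--edge lies in exactly two faces of the triangulation, and since the coloring is non--rainbow each of these faces has its third vertex colored $A$ or $B$; hence each is an $\{A,B\}$--bichromatic face, and each such face contains exactly two $(AB)$--edges. So the $\{A,B\}$--bichromatic faces and the $(AB)$--edges form a $2$--regular incidence structure, i.e.\ a disjoint union of closed walks in the dual; since two distinct triangles of a triangulation share at most one edge, each such walk has length at least $3$, so there are at least three $\{A,B\}$--bichromatic faces. These face sets are pairwise disjoint as the pair ranges over the $k-1$ edges of the tree $T/f$ (a bichromatic face determines its color pair), so $3(k-1)\le F$, which with $F=2n-4$ on the sphere and $F=2n-2$ on the projective plane gives exactly the stated bounds. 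This is precisely the paper's argument (stated there as ``for each pair of adjacent color classes there are at least three bi--chromatic faces using those colors''); until you supply this or an equivalent count, the sharp constant is not established.
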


In order to prove Theorem~\ref{thm:ub_s}, we use classical topological ideas. By considering a graph $G$ as a topological space, and a coloring $f$ as a continuous mapping $f:G\to K_k$, we will introduce the notion of a \emph{null coloring} at the level of homology (see precise definitions in Section~\ref{sec:null}). Our main result, Theorem~\ref{main}, states that for any graph $G$, a maximal null coloring is such that $G/f$ is a forest (here $G/f$ is the quotient graph of the coloring). We deduce Theorem~\ref{thm:ub_s}  from Theorem~\ref{main} by proving that, for maximal planar graphs, or maximal graphs embedded in the projective plane, a maximal non--rainbow coloring is a maximal null coloring (Lemma~\ref{lem}).

The paper is organized as follows. In section~\ref{sec:null} we define null colorings for graphs and prove Theorem~\ref{main}. In Section~\ref{sec:ub} we prove Theorem~\ref{thm:ub_s}  by means of Theorem~\ref{main}. Finally, in Section~\ref{sec:sharp}, we show thatTheorem~\ref{thm:ub_s} is best possible.

\section{Null colorings }\label{sec:null}

In this section we regard graphs as topological spaces (that is, vertices are actually points, and edges should be thought as lines connecting the points corresponding to its two incident vertices). In this seeing, we denote by $H_1(G)$ the first homology group of a connected graph $G$, which is a free abelian group with $m-n+1$ generators, where $n$ and $m$ are respectively the number of vertices and edges of the graph. 
In fact, a basis $\beta $ of generators for $H_{1}(G)$ is given by choosing a spanning tree $T$ of $G$, and an orientation for the edges $\{e_{1},...e_{q-p+1}\}$ of $G-T$.  So, giving a closed path $P$ in $G$, it represent the element $\alpha _{1}\oplus ...\oplus \alpha _{q-p+1}\in \mathbb{Z}\oplus ...\oplus \mathbb{Z}=H_{1}(G)$, according with the basis $\beta$,  where $\alpha _{i}$ is the directed sum of how many times the directed edge $e_{i}$ is transversed by $P$.

It is well known that a $k$--coloring of a graph $G$ can be view as a homomorphism $f:G\to K_k$ (note that since we allow colorings which are not proper, then the homomorphism may be reflexive). By considering graphs as  topological spaces, we can think on a $k$--coloring of $G$ also as a continuous mapping $f:G\to K_k$ (by sending each vertex of $G$ to its image in $K_k$ and extending the map linearly to the edges). Thus, at the level of homology, a $k$--coloring  $f$ induce a group homomorphism $f_*:H_1(G)\to H_1(K_k)$. 

\begin{definition}
For a graph $G$, a $k$--coloring $f:G\to K_k$ is called a \emph{null coloring} if and only if  $f_*:H_1(G)\to H_1(K_k)$ is zero.  
\end{definition}

Let $f$ be a coloring of $G$, we define the \emph{quotient graph} $G/f$ as the graph with vertices the color classes, and two color class are adjacent if there is an edge in $G$ whose incident vertices have those colors. 

In order to prove Theorem~\ref{main} below, we first prove some lemmas. Let $G$ be a graph and $f$ be a coloring of $G$. Let $u,v \in V(G)$ be two vertices with the same color, that is $f(u)=f(v)$. Denote by $G'$ the graph which is obtained from $G$ by identifying the vertices $u$ and $v$, and denote this vertex of $G'$ by $uv$. Let $h: G\to G'$ be the corresponding homomorphism. The coloring $f$ of $G$ naturally induces a coloring $f'$ of $G'$ by defining $f'(uv)=f(u)=f(v)$ and $f'(x)=f(x)$ if $x\notin \{u,v\}$. Obviously $G/f=G'/f'$. We denote by $d(u,v)$ de distance between $u$ and $v$ in $G$.

\begin{lemma}\label{lem:1}
For a coloring $f$ of $G$, and two vertices $u,v\in V(G)$ with the same color, let $G'$, $f'$, and $h:G\to G'$ as defined above. If $d(u,v)\leq 2$, then $h_*:H_1(G)\to H_1(G')$ is an epimorphism. 
\end{lemma}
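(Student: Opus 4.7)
The plan is to produce, for every cycle class in $H_1(G')$, an explicit preimage cycle in $G$, using the short $u$-$v$ path guaranteed by the hypothesis. Fix a shortest $u$-$v$ path $P$ in $G$; by assumption $P$ has at most two edges. Orient the edges of $P$ from $u$ toward $v$, so that as a $1$-chain in $G$ one has $\partial P = v - u$.

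Given a $1$-cycle $\gamma'$ representing a class in $H_1(G')$, I will build a lift $\tilde\gamma$ in $G$ edge by edge. Edges of $\gamma'$ not incident to the merged vertex $uv$ have a unique preimage in $G$; each edge of $\gamma'$ incident to $uv$ has at most two preimages (one incident to $u$, one incident to $v$), and for each such edge I choose one arbitrarily. The resulting $1$-chain $\tilde\gamma$ need not be a cycle, but every failure of closure is localized at the pair $\{u,v\}$, so $\partial\tilde\gamma = j(v-u)$ for some integer $j$. Setting $\sigma := \tilde\gamma - jP$ gives $\partial\sigma = 0$, hence $\sigma$ is a $1$-cycle in $G$, and by construction $h_*[\sigma] = [\gamma'] - j\,[h_\#(P)]$ in $H_1(G')$. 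It therefore suffices to show that $[h_\#(P)] = 0$.

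If $d(u,v)=1$ then $P$ is the single edge $\{u,v\}$ whose image is a loop at $uv$, contributing trivially to the fundamental-cycle basis of $G'$. If $d(u,v)=2$ then $P = u{-}w{-}v$ and its two edges map to the same edge between $uv$ and $w$ in $G'$, traversed once in each direction, so the oriented contributions cancel. Either way $h_*[\sigma] = [\gamma']$, proving surjectivity. The step I expect to be most delicate is the $d(u,v)=2$ cancellation, where one has to be careful about how the two edges of $G$ joining $w$ to $u$ and to $v$ are treated inside the chain complex of $G'$ under the conventions fixed earlier in the section. If the chain-level computation proves awkward, a purely combinatorial alternative is to choose a spanning tree $T$ of $G$ containing $P$, verify that its image $T' \subseteq G'$ is again a spanning tree (the identification merges two vertices of $T$ while coalescing or deleting exactly one edge of $T$), and then read off the surjectivity of $h_*$ from the induced correspondence between the non-tree edges of $G$ and those of $G'$.
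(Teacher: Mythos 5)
Your argument is correct, and it rests on the same key observation as the paper's proof: the image of the short $u$--$v$ path $P$ under $h$ is null-homologous in $G'$ (a collapsed loop when $d(u,v)=1$, a back-and-forth over the single edge joining $uv$ to $w$ when $d(u,v)=2$), so any lift of a cycle of $G'$ can be closed up by splicing in copies of $P$. The difference is in execution. The paper argues on graph cycles $\gamma$ of $G'$ and splits into cases according to whether $\gamma$ passes through $uv$ and whether the common neighbor $w$ lies on $\gamma$; its most involved case (where $w=z_{i_0}$ lies on the lifted path, forcing a decomposition into two cycles $\gamma^1,\gamma^2$) is exactly the situation your uniform chain-level argument absorbs without comment. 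You instead take an arbitrary simplicial $1$-cycle, lift it edge by edge, note that the boundary defect lies in $\ker\bigl(h_\#|_{C_0}\bigr)=\langle v-u\rangle$ so that it equals $j(v-u)$, and cancel it with $jP$. This is more systematic and in one respect cleaner: $H_1$ of a graph is its integral cycle space, so working with general $1$-cycles rather than simple closed walks removes the (harmless but unstated) reduction to fundamental cycles that the paper's case analysis relies on. The step you flag as delicate is in fact immediate, since $h_\#(uw)+h_\#(wv)=0$ already at the chain level --- the two edges of $P$ map to the same edge of $G'$ with opposite orientations --- and your spanning-tree alternative (choose $T\supseteq P$ and check $h(T)$ is a spanning tree of $G'$) also goes through.
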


\begin{proof}
If $u$ and $v$ are adjacent vertices, then $h$, thought as a topological map from $G$ to $G'$, is a homotopy equivalence because it is just the contraction of the edge $uv$, and hence  $h_*:H_1(G)\to H_1(G')$ is an isomorphism. 

Suppose now that $d(u,v)=2$, and let $w$ be a vertex adjacent to both $u$ and $v$ in $G$.  Let  $\gamma$ be a cycle of $G'$, we consider the following cases:

(1) $\gamma$ does not contains $uv$. Then, obviously the corresponding cycle of $G$ is sent, by $h$, to $\gamma$. 

(2) $\gamma=\{uv,z_1,z_2,...,z_{\rho},uv\}$ and either $\{u,z_1,z_2,...,z_{\rho},u\}$ or $\{v,z_1,z_2,...,z_{\rho},v\}$ is a cycle in $G$. Again the corresponding cycle is sent, by $h$, to $\gamma$.

(3) $\gamma=\{uv,z_1,z_2,...,z_{\rho},uv\}$ and $\{u,z_1,z_2,...,z_{\rho},v\}$ is a path in $G$. If $w\neq z_i$ for any $1\leq i\leq \rho$, then consider the cycle $\gamma^0$ of $G$, $\gamma^0=\{u,z_1,z_2,...,z_{\rho},v,w,u\}$. Clearly, at the level of homology, the element of $H_1(G)$ induced by $\gamma^0$ is sent, by $h_*$, to  the element of $H_1(G')$ induced by $\gamma$. Suppose now that $w=z_{i_0}$ for some $1\leq i_0 \leq \rho$. Let $\gamma^1=\{u,z_1,z_2,...,z_{i_0}=w,u\}$ and $\gamma^2=\{v,w=z_{i_0},z_{i_0+1},...,z_{\rho},v\}$. Note that, at the level of homology, the sum of the elements of $H_1(G)$ induced by $\gamma^1$ and $\gamma^2$ is sent, by $h_*$, to   the element of $H_1(G')$ induced by $\gamma$. 

In any case,  $h_*:H_1(G)\to H_1(G')$ is an epimorphism. 
\qed
\end{proof}

\begin{lemma}\label{lem:2}
For a maximal null coloring $f$ of $G$,  and two vertices $u,v\in V(G)$ with the same color, let $G'$, $f'$, and $h:G\to G'$ as defined above. If $d(u,v)\leq 2$,  then $f'$ is a maximal null coloring of  $G'$.

\end{lemma}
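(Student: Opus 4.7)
The plan is to exploit the factorization $f = f' \circ h$ (valid at the vertex level by construction of $f'$, and hence as continuous maps after extending linearly over edges), together with the functoriality of $H_1$, which gives $f_* = f'_* \circ h_*$. The first assertion --- that $f'$ is a null coloring of $G'$ --- then follows immediately: $f_*=0$ by hypothesis, and $h_*$ is surjective by Lemma~\ref{lem:1} (which applies since $d(u,v) \le 2$), so $f'_* \circ h_* = 0$ forces $f'_* = 0$.

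For maximality I would argue by contradiction. Suppose $f'$ admits a proper refinement $g' : G' \to K_{k'}$ that is still a null coloring; that is, $k' > k$ and $f' = \pi \circ g'$ for the induced coarsening map $\pi : K_{k'} \to K_k$. Define $g := g' \circ h : G \to K_{k'}$. Because $h$ is surjective on vertices, $g$ uses the full palette of $k'$ colors, and $\pi \circ g = \pi \circ g' \circ h = f' \circ h = f$ shows that $g$ properly refines $f$. Moreover $g_* = g'_* \circ h_* = 0$ since $g'$ is null, so $g$ itself is a null coloring of $G$ that properly refines $f$, contradicting the maximality of $f$.

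The whole argument is purely formal once one unpacks ``maximal'' correctly --- it should be read as maximal in the refinement order among null colorings, so that the configuration one must exclude is precisely ``more colors and still null''. The only substantive input is the epimorphism conclusion of Lemma~\ref{lem:1}, which is used twice in symmetric ways: first to push the vanishing of $f_*$ across $h_*$ and obtain $f'_* = 0$, and then implicitly to see that any refinement of $f'$ pulls back along $h$ to a refinement of $f$ without disturbing nullity. I do not expect any real obstacle beyond keeping this duality straight.
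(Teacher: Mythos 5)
Your proof is correct and follows the same route as the paper: nullity of $f'$ comes from the factorization $f_* = f'_* \circ h_*$ together with the surjectivity of $h_*$ supplied by Lemma~\ref{lem:1}, and maximality comes from pulling a hypothetical better null coloring $g'$ of $G'$ back to $g = g' \circ h$ on $G$. The one (harmless) deviation is that you insist $g'$ be a refinement of $f'$; in the paper ``maximal'' simply means ``uses the largest number of colors among null colorings'', so one should take $g'$ to be an arbitrary null coloring of $G'$ with more colors than $f'$ --- your pullback argument applies verbatim in that generality, since the refinement property is never actually used (only vertex-surjectivity of $h$ and $g_* = g'_* \circ h_* = 0$ matter, together with the observation that $f$ and $f'$ use the same number of colors).
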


\begin{proof}
By  Lemma~\ref{lem:1}, $f'$ is a null coloring of $G'$.  We shall prove that $f'$ is a maximal null coloring. Suppose there exist a null coloring $g'$ of $G'$ using more colors than $f'$. Then we extend $g'$ to a coloring $g$ of $G$, by letting $g(u)=g(v)=g'(uv)$.  Since, at the level of homology,  $g_*=g'_*\circ h_*$, then $g$ is a null coloring of $G$, contradicting the maximality of $f$. 

\qed
\end{proof}

Given a coloring $f$ of a graph $G$ and two different  colors $i,j$, we called an edge $e\in E(G)$ an \emph{$(ij)$--edge} if the vertices of $e$ have colors $i$ and $j$ respectively.

\begin{lemma}\label{lem:3}
Let $f:G\to K_k$ be a null coloring. For any pair of different colors $i,j\in V(K_k)$ and any cycle $C$ of $G$, the number of $(ij)$--edges in $C$ is even.
\end{lemma}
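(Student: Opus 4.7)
The plan is to reduce the statement to a direct chain-level computation, exploiting the fact that $K_{k}$ is a $1$-dimensional complex. Since there are no $2$-cells, $B_{1}(K_{k})=0$ and therefore $H_{1}(K_{k})=Z_{1}(K_{k})\subset C_{1}(K_{k})$. In particular, a $1$-cycle represents the zero class in $H_{1}(K_{k})$ if and only if it already vanishes as a chain; this is the feature that will let the homological hypothesis translate into an exact equality of edge multiplicities rather than just an equality up to boundaries.

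First I would orient the cycle $C$ and fix once and for all an orientation $e_{ij}$ for each edge of $K_{k}$. Because $f$ is linear on each edge of $G$, the induced chain map $f_{\#}:C_{1}(G)\to C_{1}(K_{k})$ sends an edge of $G$ with endpoints of distinct colors $i,j$ to $\pm e_{ij}$, the sign recording whether the chosen orientations agree, and sends an edge whose endpoints share a color to $0$. Summing these contributions along $C$ yields
\[
f_{\#}(C)=\sum_{\{i,j\}}(a_{ij}-b_{ij})\,e_{ij},
\]
where $a_{ij}$ counts the $(ij)$-edges of $C$ traversed in the direction of $e_{ij}$ and $b_{ij}$ those traversed in the opposite direction, so that the total number of $(ij)$-edges in $C$ is $a_{ij}+b_{ij}$.

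Since $f$ is a null coloring, $f_{*}([C])=0$, so by the observation of the first paragraph $f_{\#}(C)$ is the zero chain. The distinct $e_{ij}$ form a basis of $C_{1}(K_{k})$, so each coefficient $a_{ij}-b_{ij}$ must vanish, i.e.\ $a_{ij}=b_{ij}$. Hence the number of $(ij)$-edges in $C$ equals $2a_{ij}$, which is even, as claimed. The only conceptual point to be checked is that the homological hypothesis is strong enough to pin down signed edge multiplicities, not merely classes modulo boundaries; this is exactly what the identification $H_{1}(K_{k})=Z_{1}(K_{k})$ provides, so there is no serious obstacle to overcome.
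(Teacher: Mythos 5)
Your proof is correct and follows essentially the same route as the paper's: both arguments reduce the claim to the observation that nullity of $f_{*}$ forces the \emph{signed} number of traversals of the edge $(i,j)$ by the image of $C$ to vanish, whence the unsigned count $a_{ij}+b_{ij}=2a_{ij}$ is even. The only cosmetic difference is that you extract this coefficient at the chain level via $H_1(K_k)=Z_1(K_k)$ (no $2$--cells), whereas the paper reads off the same coefficient from the spanning--tree basis of $H_1(K_k)$ after choosing a spanning tree that omits the edge $(i,j)$.
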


\begin{proof}
Let $T$ be a spanning tree of $K_k$ such that the edge $(i,j)\notin E(T)$, and let $\beta $ be a basis of  $H_{1}(K_k)$ given by some orientation of  $E(K_k)\setminus E(T)$. Since $f$ is a null coloring, then the closed path $f(C)$ most represent the element $0\oplus ...\oplus 0\in \mathbb{Z}\oplus ...\oplus \mathbb{Z}=H_{1}(K_k)$. Hence the number of times that the path $f(C)$ crosses the edge $(i,j)\in E(K_k)$ is even, as any time that $(i,j)$ is taken in one direction it has to be taken in the oder direction at some point in the path.

\qed
\end{proof}

\begin{lemma}\label{lem:4}
Let $f$ be a maximal null coloring of a graph $G$. If any two vertices $u,v\in V(G)$  with the same color satisfy $d(u,v)\geq 2$,  then $G$ is a bipartite graph.
\end{lemma}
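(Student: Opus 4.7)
The plan is to deduce the conclusion directly from Lemma~\ref{lem:3}, without invoking the maximality of $f$ or the earlier Lemmas~\ref{lem:1}--\ref{lem:2}. The first thing I would observe is that the hypothesis $d(u,v)\geq 2$ for all pairs $u,v$ with $f(u)=f(v)$ is just a reformulation of the statement that $f$ is a \emph{proper} coloring: no edge of $G$ has both of its endpoints in the same color class.

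Given any cycle $C$ of $G$, I would then partition its edge set according to the unordered pair of colors at the endpoints. Because $f$ is proper, every edge of $C$ is an $(ij)$-edge for a unique pair $\{i,j\}$ with $i\neq j$, and so
\[
|E(C)| \;=\; \sum_{\{i,j\}} \bigl|\{\,e \in E(C) : e \text{ is an } (ij)\text{-edge}\,\}\bigr|,
\]
where the sum ranges over unordered pairs of distinct colors. By Lemma~\ref{lem:3} every summand is even, hence $|E(C)|$ is even. Since every cycle of $G$ has even length, $G$ contains no odd cycle and is therefore bipartite.

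There is no real obstacle here: once one recognizes that the distance condition is properness in disguise, the result is essentially immediate from Lemma~\ref{lem:3}. It is worth remarking that the maximality of $f$ plays no role in the argument; only the null-coloring property (through Lemma~\ref{lem:3}) and the properness of $f$ (through the distance hypothesis) are used.
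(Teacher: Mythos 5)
Your argument is correct and is essentially the paper's own proof: the paper likewise notes that the distance hypothesis rules out monochromatic edges and that Lemma~\ref{lem:3} forces each cycle to have an even number of dichromatic edges, hence even length. Your explicit partition of $E(C)$ by color pairs and your remark that maximality is not needed are accurate but do not change the substance of the argument.
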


\begin{proof}
By assumption there are no monochromatic edges, and by Lemma~\ref{lem:3} the number of dichromatic edges in any cycle is even. Thus, any cycle of $G$ has an even number of vertices.
\qed
\end{proof}

Now, we are ready to prove our main theorem which is the primary tool in proving Theorem~\ref{thm:ub_s}.

\begin{theorem}\label{main}
If $f$ is a maximal null coloring of a graph $G$, then $G/f$ is a forest.
\end{theorem}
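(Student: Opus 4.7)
The plan is to induct on $|V(G)|$; the base case of a single vertex is immediate. For the inductive step, given a maximal null coloring $f$, I consider two cases.

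If some two distinct same-colored vertices $u,v$ satisfy $d(u,v)\leq 2$, Lemma~\ref{lem:2} produces a smaller graph $G'$ together with a maximal null coloring $f'$ on $G'$ satisfying $G'/f'=G/f$; the induction hypothesis then yields that $G/f$ is a forest.

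Otherwise, every pair of same-colored vertices is at distance at least $3$, so by Lemma~\ref{lem:4} the graph $G$ is bipartite, and the projection $\pi\colon G\to G/f$ is locally injective (no two neighbors of a vertex share a color). I would then argue that $\pi$ is actually a covering map on each connected component, and invoke covering theory to conclude. For the covering assertion: if some $v\in V_c$ were to have no neighbor of a color $c'$ adjacent to $c$ in $G/f$, then another $w\in V_c$ must have a $c'$-neighbor, forcing $|V_c|\geq 2$; splitting $V_c$ into $\{v\}$ and $V_c\setminus\{v\}$ should yield a null coloring $g$ with one extra color, contradicting maximality. Once $\pi$ is known to be a covering, the standard argument applies: any simple cycle $\gamma$ in $G/f$ would lift, after finitely many iterations, to a closed walk $\tilde\gamma$ in $G$ with $\pi(\tilde\gamma)=k\gamma$ for some $k\geq 1$, giving $\pi_*([\tilde\gamma])=k[\gamma]\neq 0$ in the free abelian group $H_1(G/f)$ and contradicting $\pi_*=0$ (equivalent to $f_*=0$ since $G/f\hookrightarrow K_k$ is injective on $H_1$).

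The main obstacle I expect is verifying that the proposed split yields a null coloring. The quotient map $q\colon G/g\to G/f$ identifying $\{v\}$ and $V_c\setminus\{v\}$ may fold multiple pairs of edges together (whenever both halves of the split share a neighbor color), so $q_*$ need not be injective on $H_1(G/g)$. The crucial input is the distance-$\geq 3$ hypothesis: no consecutive vertex-triple of a walk in $G$ can go from $v$ through a common neighbor $x$ to some $w\in V_c\setminus\{v\}$, since this would force $d(v,w)\leq 2$. Hence the image of $\pi^g_*$ on $H_1(G)$ lies in a direct summand of $H_1(G/g)$ complementary to the kernel of $q_*$; combined with $q_*\circ\pi^g_*=\pi^f_*=0$, this forces $\pi^g_*=0$, so $g$ is null, as required.
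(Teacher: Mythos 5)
Your Case 1 (two same--colored vertices at distance at most $2$, contract via Lemma~\ref{lem:2}, apply induction) is exactly the paper's first case. Your Case 2 diverges from the paper, and it is there that the proof has a genuine gap.

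The gap is the local surjectivity of $\pi\colon G\to G/f$, i.e.\ precisely the claim you flag as ``the main obstacle.'' Your proposed fix does not work as stated. You want to show that splitting $V_c$ into $\{v\}$ and $V_c\setminus\{v\}$ yields a null coloring $g$. From $q_*\circ\pi^g_*=\pi^f_*=0$ you only get that the image of $\pi^g_*$ lies \emph{inside} $\ker q_*$, which is the rank-$(t-1)$ subgroup spanned by the ``theta'' classes $c_0\to a\to c'\to b\to c_0$ over common neighbour colours $a,b$ of the two halves. To conclude $\pi^g_*=0$ you assert that the image also lies in a complement of $\ker q_*$, and you justify this by observing that no walk in $G$ contains a consecutive triple $v,x,w$ with $w\in V_c\setminus\{v\}$. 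But a homology class in $H_1(G/g)$ only records the \emph{net} coefficient of each edge, not the order in which a cycle of $G$ traverses them; a long cycle of $G$ can visit $v$ and other vertices of $V_c$ far apart (consistently with all same--colour distances being $\ge 3$) and still have all of its ``internal'' edges cancel while the edges at $c_0$ and $c'$ contribute a nonzero theta class. Indeed, for a single cycle $C$ of $G$ through $v$ with cycle-neighbours $x_1,x_m$, one computes $[g(C)]=[f(C)]\pm[\,f(x_1)\,c\,f(x_m)\,c_0\,]$ in $H_1(K_{k+1})$, so nullity of $g$ is exactly the nontrivial content of the claim, and the ``no consecutive triple'' observation does not deliver it. Ruling this out seems to require a genuine global argument (and, as far as I can see, essentially the strength of the theorem itself in this case).

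For contrast, the paper's Case 2 avoids covering-space considerations entirely: since $G$ is bipartite (Lemma~\ref{lem:4}), assigning distinct colours to a maximum independent set and one further colour to the rest is a null coloring with at least $\lceil n/2\rceil+1$ colours, so by pigeonhole some colour class is a singleton $\{w\}$; the parity statement of Lemma~\ref{lem:3} then forces the two cycle-neighbours of $w$ on any cycle through $w$ to share a colour at distance $2$, a contradiction, so $w$ is a cut vertex or a leaf; one then applies the induction hypothesis to the components of $G-w$ (whose restricted colorings are maximal null and use pairwise disjoint colour sets) and glues the resulting trees at $w$. If you want to salvage your covering-map route, you would need an independent proof of local surjectivity; I would recommend instead adopting the singleton-colour-class argument, which closes the case cleanly.
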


\begin{proof}
It suffices to prove the theorem for connected graphs. The proof is by induction on the order of $G$. The theorem is straightforward for graphs with less that four vertices. Suppose there is a minimum $n$ such that there exist a graph $G$ of order $n$ contradicting the theorem.  

Assume first that there are two vertices $u,v\in V(G)$ with $f(u)=f(v)$ and $d(u,v)\leq 2$. By Lemma~\ref{lem:2}, $f'$ is a maximal null coloring of $G'$. Since $G/f=G'/f'$ and $G'$ has $n-1$ vertices, we get a contradiction. 

Assume now that for any pair of vertices $u,v\in V(G)$ with $f(u)=f(v)$ then $d(u,v)>2$. Then $G$  is bipartite according to Lemma~\ref{lem:4}. So, there is an independent set $I\subset V(G)$ such that $|I|\geq \lceil \frac{n}{2}\rceil$, where $|V(G)|=n$. From which we conclude that $f$ uses at least  $ \lceil \frac{n}{2}\rceil + 1$ colors, since we can define a null coloring of $G$ by assigning to each vertex of $I$ a distinct color, and to all vertices in $V(G)\setminus I$ one more color. Therefore, by the pigeonhole principle  there  must be a vertex $w\in V(G)$ such that $f(w)$ is unique. Next, we shall prove that $w$ is either a cut point or a terminal point of $G$. Suppose there is a cycle $\gamma$ of $G$ through $w$, $\gamma=\{w,z_1,z_2,...,z_{\rho},w\}$. Since $f(w)$ is unique,  Lemma~\ref{lem:3} implies that $f(z_1)=f(z_{\rho})$, a contradiction as $d(z_1,z_{\rho})=2$.  Now, let $deg(w)=r$ be the number of neighbors of $w$ in $G$, and let $G_1$, $G_2$,... ,$G_r$ be the connected components of $G-w$. Let $f_1$,...,$f_r$ be the restrictions of $f$ to each $G_1$,... ,$G_r$. It is not difficult to see that every $f_i$ is a maximal null coloring. So, by induction $G_i/f_i$ is a tree for every $1\leq i \leq r$. Note also that each $f_i$ uses district colors (otherwise $f$ would not be maximal). Then we conclude that $G/f$ is a tree.

\qed
\end{proof}

\section{Non--rainbow colorings}\label{sec:ub}

In this section we prove Theorem~\ref{thm:ub_s}  by relating the concepts of null and non--rainbow colorings concerning  maximal graphs embedded on  a sphere or a projective plane.

\begin{lemma}
Let $T$ be a maximal graph embedded on a sphere or a projective plane. Then, a non--rainbow coloring of $T$ is also a null coloring of $T$.
\end{lemma}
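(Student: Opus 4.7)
The plan is to work with the cellular chain complex of the CW structure on the host surface $S$ whose $1$-skeleton is $T$, exploiting the fact that every face of $T$ is a triangle.

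First, I would verify a purely local statement: for any non--rainbow coloring $f:T\to K_k$ and any face $F$ of $T$, the image $f_*(\partial F)\in H_1(K_k)$ is zero. Since $T$ is a triangulation, $\partial F$ is a $3$--cycle whose three vertices carry at most two distinct colors. If the three vertices share a common color, then $f\circ \partial F$ is a constant loop and so represents $0$. If exactly two colors $a,b$ occur, then two edges of $\partial F$ map to the edge $(a,b)\in E(K_k)$ with opposite orientations while the third edge collapses to a point; hence in any basis of $H_1(K_k)$ arising from a spanning tree of $K_k$ (as in Section~\ref{sec:null}) all coordinates of $f_*(\partial F)$ cancel.

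Next I would invoke the cellular chain complex of the triangulation. Because $T$ is the $1$-skeleton of a CW decomposition of $S$ and $H_1(T)$ equals the cycle space $\ker \partial_1$, the cellular computation gives
\[
H_1(T)\,\big/\,\langle\,\partial F : F\text{ a face of }T\,\rangle \;\cong\; H_1(S).
\]
For the sphere, $H_1(S^2)=0$, so the face boundaries already generate $H_1(T)$; combined with the previous paragraph this yields $f_*=0$ on all of $H_1(T)$, and $f$ is a null coloring.

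The main (and admittedly mild) obstacle is the projective plane case, since there $H_1(\mathbb{R}P^2)\cong \mathbb{Z}/2\mathbb{Z}$, so the face boundaries only span a subgroup $B\subseteq H_1(T)$ of index $2$. Equivalently, for every cycle $\gamma$ of $T$, the doubled class $2\gamma$ lies in $B$ and is therefore a $\mathbb{Z}$--linear combination of face boundaries. Applying $f_*$ gives $2\,f_*(\gamma)=0$ in $H_1(K_k)$. The finishing move is to observe that $H_1(K_k)$ is the first homology of a graph and hence free abelian, in particular torsion--free; thus $f_*(\gamma)=0$ for every cycle $\gamma$, proving that $f$ is a null coloring in the projective plane case as well.
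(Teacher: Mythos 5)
Your proof is correct and is essentially the paper's argument recast at the cellular chain level: the paper extends $f$ over the $2$--cells to a map $F:S\to K_k$ (which is exactly your observation that $f_*$ kills every face boundary) and then uses that $H_1(S)$ is $0$ or $\mathbb{Z}_2$ while $H_1(K_k)$ is free abelian, which is your torsion--freeness step. No substantive difference.
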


\begin{proof}
Let $S$ denote either a sphere or a projective plane. Note that a maximal graph $T$ embedded on $S$ is the $1$--skeleton of a triangulation  of $S$. Let $f$ be a non--rainbow $k$--coloring of $G$. Recall that $f$ may be thought as a continuous mapping from $T$ to $K_k$. The fact that $f$ is a non--rainbow coloring allows us to extend the map $f$, triangle by triangle, to a map $F:S\to K_k$. We shall note that, a the level of homology, $F_*:H_1(S)\to H_1(K_k)$ is zero, since $H_1(S)$ is either zero or $\Z_2$ and $H_1(K_k)$ is free abelian. Furthermore, if we denote by $i:T\to S$ the inclusion, then $F\circ i=f$. So, by functoriality $F_*\circ i_*=f_*$, and therefore $f_*:H_1(T)\to H_1(K_k)$ is zero. Thus, $f$ is a null coloring of $T$ as claimed. 

\qed
\end{proof}

Note that the converse of the previous lemma is obviously true, from which it follows directly the next.

\begin{lemma}\label{lem}
Let $T$ be a maximal graph embedded on a sphere or a projective plane. Then, a maximal non--rainbow coloring of $T$ is also a maximal null coloring of $T$.

\qed
\end{lemma}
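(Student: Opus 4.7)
The plan is to leverage the previous lemma together with its converse, both of which connect non--rainbow and null colorings on a triangulation. First I would record the converse explicitly: any null coloring of $T$ is non--rainbow. The argument is a direct application of Lemma~\ref{lem:3} to the boundary of a single face. Since $T$ triangulates either a sphere or a projective plane, every face of $T$ bounds a $3$--cycle. If such a cycle were rainbow, with three distinct colors $i,j,\ell$ on its vertices, then it would contain exactly one $(ij)$--edge, an odd number, contradicting Lemma~\ref{lem:3}. Hence no face of $T$ is rainbow, and $f$ is non--rainbow.

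With both implications available, the lemma reduces to a short maximality transfer. Let $f$ be a maximal non--rainbow $k$--coloring of $T$. The previous lemma already gives that $f$ is a null coloring, so it remains to show it is a maximal one. Assume for contradiction that there exists a null coloring $g$ of $T$ using $k' > k$ colors. By the converse established above, $g$ is then a non--rainbow coloring of $T$ with strictly more colors than $f$, contradicting the maximality of $f$ as a non--rainbow coloring. Therefore $f$ must be maximal among null colorings as well.

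The only non--trivial ingredient is the converse direction, and even that is a one--line consequence of Lemma~\ref{lem:3} once one invokes the triangulation hypothesis on $T$. Thus the main (and rather mild) obstacle is simply to notice that the statement ``each face is a triangle'' is what allows Lemma~\ref{lem:3} to rule out rainbow faces; everything else is a formal pull--back of maximality between the two classes of colorings.
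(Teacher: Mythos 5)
Your proof is correct and follows the same route the paper intends: the paper simply asserts that the converse of the preceding lemma (null implies non--rainbow) is ``obviously true'' and that the maximality statement ``follows directly,'' which is precisely the two-step argument you carry out. Your explicit justification of the converse via Lemma~\ref{lem:3} applied to the $3$--cycle bounding a face is a valid way to fill in the detail the paper leaves implicit, and the maximality transfer is exactly as the paper envisions.
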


Now, by means of Theorem~\ref{main} and Lemma~\ref{lem} we can prove Theorem~\ref{thm:ub_s}.

\vspace{.5cm}

\textbf{Proof of Theorem~\ref{thm:ub_s}:} Let $S$ be a sphere, and let $T$ be a maximal planar graph embedded on $S$ with $|V(T)|=n\geq 4$. Let $\chi_f(T)=k$, and consider $f$ a non--rainbow $k$--coloring  of $T$. By definition $f$ is a maximal non--rainbow coloring of $T$, and thus, by Lemma~\ref{lem}, $f$ is a maximal null coloring of $T$. Then, Theorem~\ref{main} implies that $T/f$ is a forest. 
Since $T$ is a connected graph, then the quotient graph $T/f$ is a tree.  Note that $T/f$ has $k$ vertices, and so it has $k-1$ edges. It is not difficult to see that, for each pair of adjacent color classes  there are at least three bi--chromatic faces using those colors. Thus, the total number of faces in $T$, which is $2n-4$ according to Euler's formula,  most be grater than $3(k-1)$. From this follows that $k=\chi_f(T)\leq  \left\lfloor\frac{2n-1}{3}\right\rfloor$, which concludes the proof of the theorem for maximal planar graphs. 

If $S$ is a projective plane, the proof is completely analogous except that the number of faces in a maximal graph of order $n\geq 4$ embedded on $S$, is $2n-2$. Which leads to the upper bound $k=\chi_f(T)\leq  \left\lfloor\frac{2n+1}{3}\right\rfloor$. \qed

\section{Sharpness }\label{sec:sharp}

To prove the sharpness of Theorem~\ref{thm:ub_s} we need to exhibit, for any $n\geq 4$, a maximal planar graph $T$ (respectively a maximal graph embedded in the projective plane) with $n$ vertices, and a non--rainbow coloring  of $T$ using $\left\lfloor\frac{2n-1}{3}\right\rfloor$  (respectively $\left\lfloor\frac{2n+1}{3}\right\rfloor$) colors. This can be done by taking face subdivisions of suitable given sphere or projective plane triangulations.

Let $S$ denote a surface which is either a sphere or a projective plane, and let $T$ be a maximal planar graph embedded on $S$.  To \emph{subdivide} a face $\{v_1,v_2,v_3\}$ of $T$ is to add a new vertex $u$ and the three edges $uv_1$, $uv_2$ and $uv_3$. This operation results in a new  triangulation of $S$.

\begin{proposition}
For every $n\geq 4$, there is a sphere triangulation with
$n$ vertices that has a non-rainbow coloring with
$\lfloor\frac{2n-1}{3}\rfloor$ colors.
\end{proposition}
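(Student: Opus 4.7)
The plan is to exhibit explicit triangulations for each $n \geq 4$, splitting into three cases according to $n \bmod 3$ plus the small case $n=4$. The key building block is a stellated construction that directly achieves the bound when $n \equiv 2 \pmod{3}$; the remaining two residues are then obtained by performing one or two additional face subdivisions (in the sense defined above) on this base.

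For $n = 3t+2$ with $t \geq 1$, let $H_t$ be any sphere triangulation on $t+2$ vertices; such triangulations exist for every $t+2 \geq 3$ (for $t=1$ one takes the degenerate triangulation whose $1$-skeleton is a triangle and whose two faces are the two sides of the sphere cut out by it; for $t \geq 2$ one builds them recursively by repeatedly subdividing faces of a tetrahedron). Color every vertex of $H_t$ with one fixed color $c_0$. By Euler's formula $H_t$ has exactly $2t$ faces; subdivide each face $F$ by a new vertex $v_F$ and give each $v_F$ its own fresh color. The resulting sphere triangulation has $(t+2)+2t = 3t+2$ vertices, uses $1+2t = 2t+1 = \lfloor(2n-1)/3\rfloor$ colors, and every one of its faces has the form $\{v_F, x, y\}$ with $x, y$ both colored $c_0$, hence is bichromatic and therefore non-rainbow.

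For $n = 3t+3$ with $t \geq 1$, take the stellated construction just described for $n-1$, pick any face $\{v_F, x, y\}$ and subdivide it by inserting a vertex colored $c_0$; the three new faces are $\{c_0, c_0, v_F\}$, $\{c_0, c_0, v_F\}$ and $\{c_0, c_0, c_0\}$, all non-rainbow, and the color count stays at $2t+1 = \lfloor(2n-1)/3\rfloor$. For $n = 3t+4$ with $t \geq 1$, continue from this last construction, which contains the monochromatic face $\{c_0, c_0, c_0\}$, and subdivide that monochromatic face by a vertex of a brand-new color $c_\ast$; the three resulting faces are all $\{c_\ast, c_0, c_0\}$, bichromatic, and the total number of colors rises by one to $2t+2 = \lfloor(2n-1)/3\rfloor$. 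Finally, $n=4$ is the tetrahedron $K_4$ with two vertices colored $1$ and two colored $2$, all four faces being bichromatic and the coloring using $2 = \lfloor 7/3 \rfloor$ colors.

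The one point that needs a little care is the stellated construction for $t=1$, where the base $H_1$ is the degenerate three-vertex, two-face triangulation of the sphere; but then the construction simply yields the triangular bipyramid on five vertices, which is a standard maximal planar graph, and all claims can be verified by direct inspection. Everything else amounts to routine face counting via Euler's formula.
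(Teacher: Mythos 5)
Your construction is correct and is essentially the paper's own: both start from a sphere triangulation on roughly $n/3$ vertices colored monochromatically and subdivide faces with freshly colored vertices, the only difference being that the paper uniformly subdivides $\lfloor\frac{2n-1}{3}\rfloor-1$ faces of the base (leaving some faces monochromatic when $n\not\equiv 2\pmod 3$), whereas you split into residue classes and handle the leftover vertices by extra subdivisions colored $c_0$ or with one new color. Your version is more explicit about the counting and the degenerate small cases, but the underlying idea is the same.
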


\begin{proof}
Let $T_k$ be a sphere triangulation with $k=n-\lfloor\frac{2n-1}{3}\rfloor+1$ vertices. Let $T$ be the sphere
triangulation obtained by subdividing $\lfloor\frac{2n-1}{3}\rfloor-1$ faces of $T_k$. 

Note that $T$ has exactly $n$ vertices. Now, let us give  a non-rainbow coloring of $T$ with $\lfloor\frac{2n-1}{3}\rfloor$ colors. Let the original vertices of $T_k$ be colored by one single color, and each new vertex be colored with a different color
\qed
\end{proof}

\begin{proposition}
For every $n\geq 4$, there is a projective plane triangulation with
$n$ vertices that has a non-rainbow coloring with
$\lfloor\frac{2n+1}{3}\rfloor$ colors.
\end{proposition}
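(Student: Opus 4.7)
The plan is to mirror the construction used in the preceding sphere proposition, adjusting only the numerical parameters to account for the projective plane having $2k-2$ triangular faces on $k$ vertices (rather than $2k-4$). Concretely, I would take a projective plane triangulation $T_k$ on
\[
k=n-\left\lfloor\tfrac{2n+1}{3}\right\rfloor+1
\]
vertices and form $T$ by subdividing $\lfloor(2n+1)/3\rfloor-1$ arbitrary faces of $T_k$. Since each subdivision adds exactly one vertex and replaces a triangle by three triangles, $T$ is again a projective plane triangulation and its vertex count is exactly $n$. Then I would color every original vertex of $T_k$ with one common color, and give each newly inserted vertex its own private color, producing a coloring with exactly $\lfloor(2n+1)/3\rfloor$ colors.

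The verification that the coloring is non-rainbow is a routine face-by-face check. Every face of $T$ is either (i) a face of $T_k$ that was not subdivided, in which case all three of its vertices carry the common color, or (ii) one of the three small triangles lying inside a subdivided face, whose vertex set consists of two endpoints of an edge of $T_k$ (common color) together with one new vertex (fresh color); in particular such a face uses only two colors. Either way no face is rainbow.

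The one thing that needs to be double-checked is feasibility: namely, that there exist enough faces of $T_k$ to subdivide. Since $T_k$ has $2k-2$ faces, the requirement is $2k-2 \geq \lfloor(2n+1)/3\rfloor - 1$, which simplifies to $2n+1 \geq 3\lfloor(2n+1)/3\rfloor$ and is automatic.

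The main obstacle, and the only point where the proof is not a mechanical translation of the sphere argument, is the existence of a base projective plane triangulation $T_k$ for small values of $k$: a projective plane triangulation by a simple graph requires at least six vertices (the $K_6$ embedding), while the formula above gives $k=2$ when $n=4$ and very small values of $k$ for other small $n$. My plan is to handle this in the same spirit as the preceding sphere proposition, where the analogous obstruction ($k<4$) is implicitly accommodated by permitting multi-edges in $T_k$; if one prefers to restrict to simple triangulations, the small values of $n$ (say $4\leq n<16$, when $k<6$) can be dispatched by a short list of explicit triangulations of the projective plane with the appropriate colorings, the construction above then covering all remaining $n$.
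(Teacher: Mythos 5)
Your construction is exactly the paper's (the paper's own proof is a single sentence deferring to the sphere case with the adjusted parameter $k=n-\lfloor\frac{2n+1}{3}\rfloor+1$), and your parameter bookkeeping, face count $2k-2$, and non-rainbow verification are all correct. The small-$k$ issue you flag is real and is silently ignored by the paper as well --- a simple projective plane triangulation needs at least $6$ vertices ($K_6$), and $k\geq 6$ only once $n\geq 14$ --- so your plan to dispatch $4\leq n\leq 13$ by explicit examples (or by allowing degenerate base triangulations) is a necessary supplement to make either version of the proof complete, not an optional extra.
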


\begin{proof}
Just start whit a projective plane triangulation with $k=n-\lfloor\frac{2n+1}{3}\rfloor+1$ vertices. 
\qed
\end{proof}

\end{document}